\documentclass[11pt]{article}

\usepackage[margin=1in]{geometry}
\usepackage{amsmath,amssymb,amsthm,mathtools}
\usepackage{needspace}
\usepackage[hidelinks]{hyperref}

\newtheorem{theorem}{Theorem}[section]
\newtheorem{lemma}[theorem]{Lemma}
\newtheorem{corollary}[theorem]{Corollary}

\newcommand{\one}{\mathbf 1}
\newcommand{\ii}{\mathrm i}

\title{A six-neuron counterexample to the target-free clique conjecture}
\author{Jesse Geneson}
\date{}

\begin{document}

\maketitle

\begin{abstract}
The target-free clique conjecture asserts that the supports of stable fixed
points of a nondegenerate combinatorial
threshold-linear network (CTLN) are exactly its target-free cliques:
bidirected cliques for which no outside vertex receives an edge from every
clique vertex.  We give an explicit six-neuron counterexample.  For every
sufficiently small $\varepsilon>0$, the CTLN
defined by one fixed graph at $\delta=29\varepsilon/25$ is
nondegenerate and has a stable fixed point with
nonclique full support.  Its values of
$q=\delta(1-\varepsilon)/\varepsilon$ tend to $29/25$.  In the
complementary direction, for any CTLN on $n\geq3$ vertices, we prove that in
the parameter range
\[
 q\geq n-2-\frac{n-3}{2}\varepsilon,
\]
no nonclique support can satisfy both the fixed-point positivity and linear
stability conditions.
Consequently, throughout this range, every nondegenerate CTLN has
exactly its target-free cliques as supports of stable
fixed points.  In particular, this holds when
$\varepsilon\leq\delta/(\delta+n-2)$.  
\end{abstract}

\noindent\textbf{Keywords:} combinatorial threshold-linear network,
stable fixed point, target-free clique

\noindent\textbf{2020 Mathematics Subject Classification:}
05C20, 37N25, 92B20

\section{Introduction}

A combinatorial threshold-linear network, or CTLN, is a competitive
threshold-linear network determined by a loopless directed graph, two
connectivity parameters $\varepsilon,\delta$, and a positive uniform drive
$\theta$.  Their piecewise-linear dynamics range from fixed points to chaos
\cite{MorrisonDegeratuItskovCurto2024}, while their fixed-point supports admit
a graphical theory \cite{CurtoGenesonMorrison2019} extending the permitted-set
theory of symmetric threshold-linear networks
\cite{HahnloserSeungSlotine2003}.  Distinguished induced subgraphs also
predict nonstationary attractors \cite{ParmeleeMooreMorrisonCurto2022} and
support constructions for sequence and pattern generation
\cite{LondonoAlvarezMorrisonCurto2026}.

Every target-free clique supports a stable fixed point.  The conjecture that
these are the only stable fixed-point supports of a nondegenerate CTLN
\cite{MorrisonDegeratuItskovCurto2024} is known for symmetric graphs
\cite{CurtoMorrison2016}, oriented graphs
\cite{MorrisonDegeratuItskovCurto2024}, supports of size at most four, and
several other graph and parameter classes \cite{CurtoGenesonMorrison2024}.
It appears as Open Question~10 in \cite{CurtoMorrison2023}.  The analogous
minimal-support conjecture for general competitive threshold-linear networks
has recently been disproved \cite{Geneson2025}, but that construction is not
combinatorial.

For $\varepsilon,\delta>0$, write
\[
 q=\frac{\delta(1-\varepsilon)}{\varepsilon};
\]
the legal CTLN parameter range is $q>1$.  Theorem~\ref{thm:six-counterexample}
gives a fixed six-vertex graph for which, at
$\delta=29\varepsilon/25$ and every sufficiently small $\varepsilon>0$,
the CTLN is nondegenerate and has a stable fixed point with nonclique full
support.  Thus the target-free clique conjecture is false. 

In the complementary direction, the all-graph sufficient bound in
\cite{CurtoGenesonMorrison2024} is $q>n^2-n-1$.  We replace it
by the linear bound
\begin{equation}\label{eq:intro-bound}
 q\geq n-2-\frac{n-3}{2}\varepsilon.
\end{equation}
It yields the simple sufficient condition
$\varepsilon\leq\delta/(\delta+n-2)$, improving the sufficient scale at
fixed $\delta$ from order $\delta/n^2$ to order $\delta/n$ as
$n\to\infty$ with $\delta$ fixed.

\section{Networks and support criteria}\label{sec:setup}

For every positive integer $d$, write $[d]=\{1,\ldots,d\}$.  Fix a
positive integer $n$, and let $G$ be a directed graph on $[n]$, with no
self-loops and at most one edge in each
ordered direction between distinct vertices.  Edges in both directions are
allowed.  The CTLN associated with $G$ has state
$x(t)\in\mathbb R_{\geq0}^n$, weight matrix $W\in\mathbb R^{n\times n}$,
and dynamics
\begin{equation}\label{eq:dynamics}
 \dot x_i=-x_i+
 \left[\sum_{j=1}^nW_{ij}x_j+\theta\right]_+,
 \qquad i\in[n],\qquad \theta>0,
\end{equation}
where $[u]_+=\max\{u,0\}$ and
\[
 W_{ij}=
 \begin{cases}
 0,&i=j,\\
 -1+\varepsilon,&j\to i,\\
 -1-\delta,&j\not\to i,
 \end{cases}
\]
at parameters
\begin{equation}\label{eq:legal}
 \delta>0,
 \qquad
 0<\varepsilon<\frac{\delta}{1+\delta}.
\end{equation}
We call \eqref{eq:legal} the \emph{legal range}.

A \emph{fixed point} is a state $x^*$ at which the right side of
\eqref{eq:dynamics} vanishes.  Its \emph{support} is
\[
 \operatorname{supp}(x^*)=\{i\in[n]:(x^*)_i>0\}.
\]
Because $\theta>0$, the zero state is not a fixed point; hence fixed-point
supports are nonempty.  Using the Euclidean norm, we call a fixed point
\emph{stable} if there are
constants $r,M,\alpha>0$ such that every solution with
$\lVert x(0)-x^*\rVert<r$ satisfies
\begin{equation}\label{eq:stability-definition}
 \lVert x(t)-x^*\rVert
 \leq M e^{-\alpha t}\lVert x(0)-x^*\rVert
 \qquad(t\geq0).
\end{equation}
Since $[\theta u]_+=\theta[u]_+$ for $\theta>0$, the substitution
$x=\theta y$ reduces \eqref{eq:dynamics} to unit drive.  Thus positive drive
scaling changes fixed-point values but not their supports or stability.

A \emph{clique} is a vertex set joined in both directions.  A vertex
outside a clique is a \emph{target} if it receives an edge from every
vertex in the clique.  A clique is \emph{target-free} if it has no target.

Fix a nonempty candidate support $\sigma=\{v_1,\ldots,v_m\}\subseteq[n]$
and fix the displayed ordering.  Its \emph{nonedge matrix} is the
$m\times m$ zero-one matrix $N$ defined, for $a,b\in[m]$, by
\[
 N_{aa}=0,
 \qquad
 N_{ab}=1\ \Longleftrightarrow\ v_b\not\to v_a
 \quad(a\ne b).
\]
Set
\begin{equation}\label{eq:parameters}
 t=\frac{1-\varepsilon}{\varepsilon},
 \qquad
 \beta=1+\frac{\delta}{\varepsilon},
 \qquad
 q=\frac{\delta(1-\varepsilon)}{\varepsilon}.
\end{equation}
For each $d\geq1$, let $I_d$ be the $d\times d$ identity matrix, let
$\one_d\in\mathbb R^d$ be the all-ones column vector, and put
$J_d=\one_d\one_d^T$.  We also write
\[
 \one_d^\perp=\{u\in\mathbb R^d:\one_d^Tu=0\}.
\]
We omit a dimension subscript only when the
dimension is forced unambiguously.  Write $(I_n-W)_\sigma$ for the
principal submatrix indexed by the displayed ordering of $\sigma$, and set
\begin{equation}\label{eq:K}
 A_\sigma:=(I_n-W)_\sigma,
 \qquad
 K:=\varepsilon^{-1}A_\sigma=I_m+\beta N+tJ_m.
\end{equation}
Thus $K$ is the active principal matrix with the positive scalar
$\varepsilon$ divided out.  The legal condition is equivalent to $q>1$,
and also to $t(\beta-2)>1$.

We state the standard matrix identities used below in their exact forms.  For
$A\in\mathbb C^{d\times d}$, $b\in\mathbb C^d$, $i\in[d]$, and
$S\subseteq[d]$, let $\operatorname{adj}(A)$ be the transpose of the cofactor
matrix; write $A[i\leftarrow b]$ for $A$ with column $i$ replaced by $b$, and
$A_S$ for the principal submatrix indexed by $S$.  If $A$ is nonsingular,
Cramer's formula is
\begin{equation}\label{eq:cramer}
 (A^{-1}b)_i=\frac{\det A[i\leftarrow b]}{\det A}.
\end{equation}
For $u,v\in\mathbb C^d$, the rank-one determinant identities are
\begin{align}
 \det(A+uv^T)&=\det A+v^T\operatorname{adj}(A)u,
   \label{eq:rank-one-determinant}\\
 \det(A+uv^T)&=\det A\bigl(1+v^TA^{-1}u\bigr)
   \quad(A\text{ nonsingular}).
   \label{eq:rank-one-determinant-invertible}
\end{align}
For $\alpha\in\mathbb C$, $b,c\in\mathbb C^d$, and
$M\in\mathbb C^{d\times d}$, the bordered identity, valid without assuming
that $M$ is nonsingular, is
\begin{equation}\label{eq:block-determinant}
 \det\begin{pmatrix}\alpha&-b^T\\-c&M\end{pmatrix}
 =\alpha\det M-b^T\operatorname{adj}(M)c.
\end{equation}
Writing
$\det(zI_d+A)=z^d+c_1(A)z^{d-1}+\cdots+c_d(A)$ as a polynomial in the
indeterminate $z$, we have, for $1\leq j\leq d$,
\begin{equation}\label{eq:coefficient-minors}
 c_j(A)=\sum_{\substack{S\subseteq[d]\\|S|=j}}\det A_S,
 \qquad c_1(A)=\operatorname{tr}A,
\end{equation}
where $\operatorname{tr}A=\sum_{i=1}^dA_{ii}$.  Finally, if
$\lambda_1,\ldots,\lambda_d$ are the eigenvalues of $A$, counted with
algebraic multiplicity, then
\begin{equation}\label{eq:spectral-facts}
 \det(zI_d+A)=\prod_{j=1}^d(z+\lambda_j),
 \qquad \operatorname{tr}A=\sum_{j=1}^d\lambda_j.
\end{equation}
For real $A$, nonreal eigenvalues occur in conjugate pairs, and the algebraic
multiplicity of an eigenvalue is at least the dimension of its eigenspace.
The identities above are from
\cite{HornJohnson2013}.

A real square matrix is called \emph{stable} if every eigenvalue has
positive real part.  We use the same word for a linear endomorphism when
one, equivalently every, representing matrix is stable.  A real
$d\times d$ matrix $A$ is \emph{permitted} (for uniform input) if it is
nonsingular and $A^{-1}\one_d>0$ entrywise.  We call $\sigma$ permitted
when $K$ is permitted.  In that case define
\begin{equation}\label{eq:restricted-fp}
 x_\sigma^*=\frac{\theta}{\varepsilon}K^{-1}\one_m,
 \qquad x_{[n]\setminus\sigma}^*=0.
\end{equation}
The standard fixed-point equations say that this candidate is a fixed point
exactly when, for every $k\notin\sigma$,
\begin{equation}\label{eq:outside}
 \theta+\sum_{j\in\sigma}W_{kj}(x_\sigma^*)_j\leq0.
\end{equation}
Conversely, if $A_\sigma$ is nonsingular, every fixed point with support
$\sigma$ has the form \eqref{eq:restricted-fp} and satisfies
\eqref{eq:outside} \cite{CurtoGenesonMorrison2019}.

We say that $\sigma$ satisfies the \emph{algebraic stable-support
conditions} if it is permitted, $K$ is stable, and all inequalities
\eqref{eq:outside} hold. 

A CTLN is \emph{nondegenerate} if, for every nonempty subset
$\rho\subseteq[n]$, the principal matrix $(I_n-W)_\rho$ is nonsingular
and, for every $i\in\rho$, the determinant obtained by replacing its
column indexed by $i$ with $\one_{|\rho|}$ is nonzero
\cite{CurtoGenesonMorrison2024}.  We call the latter
determinants \emph{uniform-input column-replacement determinants}.  In a
nondegenerate CTLN, every outside inequality of a fixed point is strict.
Indeed, if equality held at $u\notin\sigma$, then the vector $z$, indexed
by $\sigma\cup\{u\}$ and defined by $z_j=(x_\sigma^*)_j$ for
$j\in\sigma$ and $z_u=0$, would solve
\[
 (I_n-W)_{\sigma\cup\{u\}}z=\theta\one_{m+1}.
\]
Order $\sigma\cup\{u\}$ with $u$ last, let
$A=(I_n-W)_{\sigma\cup\{u\}}$, and set
$A^{(u)}=A[m+1\leftarrow\one_{m+1}]$.  Formula \eqref{eq:cramer} gives
\[
 z_u=\theta\frac{\det A^{(u)}}{\det A}.
\]
Since $z_u=0$, $\theta>0$, and $\det A\ne0$, this would give
$\det A^{(u)}=0$, contrary to nondegeneracy.

For a nondegenerate CTLN, the algebraic stable-support
conditions are equivalent to stability of the corresponding fixed point \cite{CurtoGenesonMorrison2024}. We say that a CTLN has the \emph{algebraic target-free clique
classification} if the supports satisfying the algebraic stable-support
conditions are exactly its target-free cliques.  For a nondegenerate CTLN,
this is equivalent to the \emph{dynamical target-free clique
classification}: its target-free cliques are exactly the supports of its
stable fixed points.

\begin{lemma}\label{lem:activity}
Let $d\geq1$, let $N\in\mathbb R^{d\times d}$ be entrywise nonnegative
with zero diagonal, let $\beta,t\geq0$, and put
$C=I_d+\beta N$ and $K=C+tJ_d$.  Then
\[
 K\text{ is permitted}
 \quad\Longleftrightarrow\quad
 C\text{ is permitted}.
\]
When these conditions hold,
\begin{equation}\label{eq:rank-one-activity}
 K^{-1}\one_d=
 \frac{C^{-1}\one_d}{1+t\one_d^TC^{-1}\one_d}.
\end{equation}
\end{lemma}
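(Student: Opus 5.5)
The plan is to compare $C$ and $K=C+t\one_d\one_d^T$ through the rank-one identity \eqref{eq:rank-one-determinant-invertible} together with Cramer's formula \eqref{eq:cramer}. The two directions differ: from $C$ to $K$ is direct, while from $K$ to $C$ needs a connectedness argument in $t$.

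\emph{From $C$ to $K$.} Assume $C$ is permitted and set $s=\one_d^TC^{-1}\one_d$. Then $s>0$, since $C^{-1}\one_d>0$. Identity \eqref{eq:rank-one-determinant-invertible} gives
\[
 \det K=\det C\,(1+ts)\neq0,
\]
so $K$ is nonsingular. Sherman--Morrison, or simply checking that $K$ applied to the right side of \eqref{eq:rank-one-activity} returns
\[
 \frac{\one_d+t\one_d s}{1+ts}=\one_d,
\]
establishes \eqref{eq:rank-one-activity}. Its right side is a positive vector divided by a positive scalar, so $K$ is permitted.

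\emph{From $K$ to $C$.} Assume $K$ is permitted and set $y=K^{-1}\one_d>0$. Then
\[
 Cy=\one_d-t(\one_d^Ty)\one_d=(1-t\,\one_d^Ty)\one_d .
\]
If $C$ is nonsingular, then $C^{-1}\one_d$ is a scalar multiple of $y$. The scalar must be positive, since otherwise $c\,C^{-1}\one_d=y$ fails. Thus everything hinges on two facts: $C$ is nonsingular, and $1-t\one_d^Ty>0$.

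This is the main obstacle, because it needs the sign structure. Here $C=I+\beta N$ has nonnegative off-diagonal entries, so $C$ is not a Z-matrix and nonnegativity cannot be used in the usual M-matrix way.

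\emph{Step 1 (sign of the scalar).} Suppose $1-t\one_d^Ty\le0$. Then $Cy=c\one_d$ with $c\le0$. But $y>0$ and $C=I+\beta N$ is entrywise nonnegative with positive diagonal, so $Cy\ge y>0$ entrywise. This contradicts $c\le 0$. Hence $c>0$.

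\emph{Step 2 (nonsingularity of $C$).} Suppose $Cw=0$ with $w\neq0$. Since $Ky=\one_d$, we may write
\[
 Cw=Kw-t(\one_d^Tw)\one_d=0,
\qquad\text{so}\qquad
 w=t(\one_d^Tw)\,K^{-1}\one_d=t(\one_d^Tw)\,y .
\]
If $t(\one_d^Tw)=0$ then $w=0$, a contradiction. Otherwise $w$ is a nonzero multiple of $y>0$, so $Cw$ is a nonzero multiple of $Cy=c\one_d\neq0$. This contradicts $Cw=0$. So $C$ is nonsingular.

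Combining the steps, $C^{-1}\one_d=y/c>0$, and $C$ is permitted. The formula \eqref{eq:rank-one-activity} then follows from the forward direction.

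The degenerate case $t=0$ is trivial, since then $K=C$, and the argument above covers all $t\ge0$ uniformly. So no connectedness argument is actually needed: the positivity of $y$ and the entrywise nonnegativity of $C$ do the work in Step 1. That step is the one I expected to be delicate, and it is where the hypotheses $N\ge0$ and $\beta\ge0$ are used.
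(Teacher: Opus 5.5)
Your proof is correct and follows the paper's argument. The direction from $C$ to $K$ uses the rank-one determinant identity and direct multiplication. The direction from $K$ to $C$ uses $Cy=(1-t\one_d^Ty)\one_d\geq y>0$ to obtain a positive scalar, and the only cosmetic difference is that you rule out singularity of $C$ with a right null vector $w$ (forcing $w\in\operatorname{span}(y)$), whereas the paper uses a left null vector $z^T$ (forcing $z^T\one_d=0$ and hence $z^TK=0$).
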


\begin{proof}
Suppose $x=K^{-1}\one_d>0$.  Since
$Cx=a\one_d$, where $a=1-t\one_d^Tx$, and $C\geq I_d$ entrywise, we have
$a\one_d=Cx\geq x>0$.  If $C$ were singular, a nonzero $z^T$ with
$z^TC=0$ would give $0=z^TCx=a z^T\one_d$, hence $z^TK=0$, contradicting
the nonsingularity of $K$.  Thus $C^{-1}\one_d=x/a>0$.

Conversely, let $y=C^{-1}\one_d>0$.  The denominator
$1+t\one_d^Ty$ is positive, and the rank-one identity
\eqref{eq:rank-one-determinant-invertible}, with
$u=t\one_d$ and $v=\one_d$, gives
\[
 \det K
 =\det C\,(1+t\one_d^TC^{-1}\one_d)\ne0,
\]
while direct multiplication gives
$K[y/(1+t\one_d^Ty)]=\one_d$.  This proves
\eqref{eq:rank-one-activity} and the converse.
\end{proof}

We will use the standard target-free clique rule
\cite{CurtoGenesonMorrison2024}.  For completeness, its entire
calculation in the present notation is as follows.  If $\sigma$ is a clique
of size $s$, then
\[
 (I_n-W)_\sigma=\varepsilon I_s+(1-\varepsilon)J_s.
\]
Its eigenvalues are $\varepsilon$, with multiplicity $s-1$, and
$s-(s-1)\varepsilon$, and the restricted fixed point has constant coordinate
\[
 c=\frac{\theta}{s-(s-1)\varepsilon}>0.
\]
If $u\notin\sigma$ receives edges from all but $r$ clique vertices, its input
is
\begin{equation}\label{eq:clique-input}
 \theta+\sum_{j\in\sigma}W_{uj}c
 =\frac{\theta[\varepsilon-r(\varepsilon+\delta)]}
 {s-(s-1)\varepsilon}.
\end{equation}
This is positive for $r=0$ and strictly negative for $r\geq1$.  Hence a
clique satisfies the algebraic stable-support conditions exactly when it is
target-free, and then its fixed point is stable.

Two elementary observations will be used below.  First, if a real matrix
$A$ is stable, every coefficient of $\det(zI+A)$ is positive: each positive
real eigenvalue contributes $z+a$, and each conjugate pair $a\pm\ii b$
contributes $z^2+2az+a^2+b^2$, with $a>0$.  Second, a nonzero zero-one
matrix $N$ of order at most two with zero diagonal cannot make
$K=I+\beta N+tJ$ stable at legal parameters.  The order-one case is
vacuous; up to transposition, the order-two possibilities are
\[
 \begin{pmatrix}0&1\\0&0\end{pmatrix}
 \quad\text{or}\quad
\begin{pmatrix}0&1\\1&0\end{pmatrix}.
\]
Writing $L=\one_2^TN\one_2$ and letting $w$ indicate that both off-diagonal
entries are one, direct expansion gives
\[
 \det K=1+2t-\beta tL-\beta^2w.
\]
For $L=1$ this is $1-t(\beta-2)<0$ by legality; for $L=2$ it is
$1+2t-2\beta t-\beta^2<0$ because $\beta>2$.  Both contradict the positive
constant coefficient required by stability.

\section{An explicit six-neuron counterexample}\label{sec:six}

For $m\geq1$, set
\[
 P_m=I_m-\frac1mJ_m.
\]
Then $P_m^T=P_m$, $P_m^2=P_m$, $P_m\one_m=0$, and $P_mu=u$ for
$u\in\one_m^\perp$; hence $P_m$ is the orthogonal projection onto
$\one_m^\perp$.

\begin{lemma}\label{lem:spectral-limit}
Let $m\geq2$ and let $C\in\mathbb R^{m\times m}$.  If the endomorphism
$(P_mC)|_{\one_m^\perp}:\one_m^\perp\to\one_m^\perp$ is stable, then
$C+tJ_m$ is stable for all sufficiently large $t$.
\end{lemma}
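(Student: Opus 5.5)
Work in an orthonormal basis adapted to the decomposition $\mathbb R^m=\operatorname{span}(\one_m)\oplus\one_m^\perp$. Let $e=\one_m/\sqrt m$, and let $Q\in\mathbb R^{m\times(m-1)}$ have orthonormal columns spanning $\one_m^\perp$. Then $U=(e\;Q)$ is orthogonal, and $U^T(C+tJ_m)U$ has the block form
\[
 \begin{pmatrix} e^TCe+tm & e^TCQ\\ Q^TCe & Q^TCQ\end{pmatrix},
\]
because $J_me=me$ and $J_mQ=0$. The lower-right block $Q^TCQ$ represents $(P_mC)|_{\one_m^\perp}$ in the basis given by the columns of $Q$, since $P_m=QQ^T$. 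By hypothesis, $Q^TCQ$ is therefore stable. Only the $(1,1)$ entry depends on $t$, and it grows like $tm$.

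Next, apply a block similarity that nearly decouples the two blocks. Write the matrix as
\[
 \begin{pmatrix}\mu&b^T\\c&D\end{pmatrix},\qquad \mu=\mu(t)=e^TCe+tm\to\infty,
\]
where $D=Q^TCQ$ is stable and $b,c$ are fixed. Conjugate by $\operatorname{diag}(s,I_{m-1})$ with $s=\mu^{-1/2}$. This gives
\[
 \begin{pmatrix}\mu&s\,b^T\\ s^{-1}c&D\end{pmatrix},
\]
and the off-diagonal blocks are of size $O(\mu^{1/2})$, which is not useful. I would therefore use a cleaner argument on the characteristic polynomial instead. By the bordered identity \eqref{eq:block-determinant} applied to $zI+$ the block matrix,
\[
 \det(zI_m+U^T(C+tJ_m)U)=(z+\mu)\det(zI+D)-b^T\operatorname{adj}(zI+D)c .
\]
The eigenvalues of $C+tJ_m$ are the values $\lambda$ at which this polynomial vanishes at $z=-\lambda$.

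Now divide by $\mu$ and let $t\to\infty$. The polynomial $\mu^{-1}\det(zI+\cdots)$ equals $(z/\mu+1)\det(zI+D)-\mu^{-1}b^T\operatorname{adj}(zI+D)c$. On any compact disc in $\mathbb C$ this converges uniformly to $\det(zI+D)$. By Hurwitz's theorem (or continuity of roots), for large $t$ exactly $m-1$ roots lie near the roots of $\det(zI+D)$. Those roots are $-\lambda_j(D)$, which have negative real parts, so the corresponding eigenvalues of $C+tJ_m$ have positive real parts for large $t$.

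It remains to locate the last eigenvalue. The trace of $C+tJ_m$ is $\operatorname{tr}C+tm$, and the other $m-1$ eigenvalues stay bounded. By \eqref{eq:spectral-facts}, the remaining eigenvalue equals the trace minus their sum, which is $tm+O(1)$. It is therefore real up to conjugation pairing; since it is unpaired it is real, and it is positive for large $t$. Hence all eigenvalues of $C+tJ_m$ have positive real part for $t$ sufficiently large.

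The main obstacle is making the root-counting step rigorous without a perturbation-size argument, because the perturbation $tJ_m$ is unbounded. I would handle it with the Hurwitz/Rouché argument on $\mu^{-1}\det(zI+\cdots)$ over a fixed disc containing the roots of $\det(zI+D)$ in its interior, with $\mu\to\infty$.
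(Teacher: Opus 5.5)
Your proposal is correct and follows the paper's proof essentially step for step. Both use the orthonormal basis $[\one_m/\sqrt m,Q]$, the bordered determinant identity, normalization of the characteristic polynomial by the growing $(1,1)$ entry, Rouch\'e/Hurwitz root counting for the $m-1$ bounded eigenvalues, and the trace identity for the last eigenvalue.

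One point to tighten: the root counting must be done on small disjoint discs around each zero of $\det(zI+D)$, each lying in the open left half-plane (the paper does this in the $\lambda$ variable). A single disc containing all the zeros only bounds the roots and does not fix the sign of their real parts. Also, your reality argument for the last eigenvalue is unnecessary, since the trace already gives it real part $tm+O(1)\to+\infty$.
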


\begin{proof}
Put $u=\one_m/\sqrt m$, and let
$Q\in\mathbb R^{m\times(m-1)}$ have as its columns an orthonormal basis of
$\one_m^\perp$.  Since $P_m=QQ^T$ and $J_m=muu^T$, the stable matrix
$D=Q^TCQ$ represents $(P_mC)|_{\one_m^\perp}$, and the matrix of
$A_t=C+tJ_m$ in the orthonormal basis $[u,Q]$ is
\[
 \begin{pmatrix}
 mt+a&b^T\\
 c&D
 \end{pmatrix},
 \qquad
 a=u^TCu,\qquad b^T=u^TCQ,\qquad c=Q^TCu.
\]
The exact bordered identity \eqref{eq:block-determinant}, with
$\alpha=\lambda-mt-a$ and $M=\lambda I_{m-1}-D$, gives
\begin{equation}\label{eq:limit-characteristic}
\begin{split}
 \det(\lambda I_m-A_t)
 ={}&(\lambda-mt-a)\det(\lambda I_{m-1}-D)\\
 &-b^T\operatorname{adj}(\lambda I_{m-1}-D)c.
\end{split}
\end{equation}
Define
\[
 p_t(\lambda)=\frac{-1}{mt}\det(\lambda I_m-A_t),
 \qquad
 p(\lambda)=\det(\lambda I_{m-1}-D).
\]
Equation~\eqref{eq:limit-characteristic} gives the explicit formula
\begin{equation}\label{eq:limit-uniform}
 p_t(\lambda)
 =\left(1+\frac{a-\lambda}{mt}\right)p(\lambda)
 +\frac1{mt}b^T\operatorname{adj}(\lambda I_{m-1}-D)c.
\end{equation}
Thus $p_t\to p$ uniformly on compact sets.  Choose pairwise disjoint closed
disks, each contained in the open right half-plane and containing one
distinct eigenvalue of $D$ in its interior, so that their union contains the
spectrum with algebraic multiplicity.  Uniform convergence gives, for all
sufficiently large $t$,
\[
 |p_t(\lambda)-p(\lambda)|<|p(\lambda)|
\]
on these boundaries.  In the exact form used here, Rouch\'e's theorem says
that holomorphic $f,g$ near a closed disk have the same number of interior
zeros, counted with multiplicity, if $|f-g|<|g|$ on its boundary
\cite{Ahlfors1979}.  Applied to each disk, it shows that
$m-1$ eigenvalues $\mu_{1,t},\ldots,\mu_{m-1,t}$ of $A_t$ lie in their union
and remain bounded.

Let $\nu_t$ be the remaining eigenvalue of $A_t$.  The trace identity in
\eqref{eq:spectral-facts} and $\operatorname{tr}J_m=m$ give
\[
 \nu_t+\sum_{j=1}^{m-1}\mu_{j,t}
 =\operatorname{tr}A_t=mt+\operatorname{tr}C.
\]
Hence $\operatorname{Re}\nu_t\to+\infty$.  All eigenvalues of $C+tJ_m$ are
therefore in the open right half-plane for sufficiently large $t$.
\end{proof}

\Needspace{18\baselineskip}
\begin{theorem}\label{thm:six-counterexample}
Let
\begin{equation}\label{eq:six-N}
N=
\begin{pmatrix}
0&0&0&0&1&1\\
0&0&0&0&0&1\\
0&0&0&0&0&1\\
1&1&0&0&1&0\\
0&0&1&1&0&0\\
0&0&0&1&1&0
\end{pmatrix}.
\end{equation}
There is an $\varepsilon_0>0$ such that, for every
$0<\varepsilon<\varepsilon_0$ and every $\theta>0$, the CTLN with full
nonedge matrix $N$ and
\[
 \delta=\frac{29}{25}\varepsilon
\]
is legal and nondegenerate, its full support is permitted, and the
corresponding fixed point is stable.  The full
support is not a clique, and
\[
 q=\frac{29}{25}(1-\varepsilon)\longrightarrow\frac{29}{25}
 \qquad(\varepsilon\downarrow0).
\]
\end{theorem}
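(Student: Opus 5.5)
The plan is to use the fact that $\delta=\tfrac{29}{25}\varepsilon$ freezes $\beta$ while $t$ becomes the only large parameter. Indeed, \eqref{eq:parameters} gives $\beta=1+\delta/\varepsilon=54/25$ for every $\varepsilon$, while $t=(1-\varepsilon)/\varepsilon\to\infty$ as $\varepsilon\downarrow0$. So $K=C+tJ_6$ with the \emph{fixed} rational matrix $C=I_6+\tfrac{54}{25}N$.

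Legality is immediate: $q=\tfrac{29}{25}(1-\varepsilon)>1$ exactly when $\varepsilon<4/29$. The full support is not a clique because $N\neq0$. The drive $\theta$ plays no role, since it only rescales fixed points. The full support has no outside vertices, so \eqref{eq:outside} is vacuous, and it remains to check three things for $t$ large: permittedness, stability of $K$, and nondegeneracy.

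\emph{Permittedness.} By Lemma~\ref{lem:activity}, $K$ is permitted for every $t\ge0$ as soon as $C$ is permitted. I would therefore compute $C^{-1}\one_6$ exactly over $\mathbb Q$ at $\beta=54/25$ and check that it is entrywise positive. This is a single finite rational computation, independent of $\varepsilon$.

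\emph{Stability.} I would invoke Lemma~\ref{lem:spectral-limit} with $C=I_6+\tfrac{54}{25}N$. This requires showing that the $5\times5$ matrix $D=Q^TCQ$ representing $(P_6C)|_{\one_6^\perp}$ is stable. A convenient rational basis of $\one_6^\perp$, namely $e_i-e_6$, gives a similar rational matrix, so its characteristic polynomial can be computed exactly. I would then verify stability by the Routh--Hurwitz criterion, i.e.\ positivity of the coefficients and of the Hurwitz determinants. The lemma then yields $t_1$ such that $K$ is stable for $t>t_1$, i.e.\ for $\varepsilon<1/(1+t_1)$. This is presumably where the specific value $29/25$ (equivalently $\beta=54/25$) was tuned: it must make $D$ stable while keeping $C^{-1}\one>0$. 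I expect the exact Hurwitz verification to be the main computational obstacle, though it is finite.

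\emph{Nondegeneracy.} For each nonempty $\rho\subseteq[6]$, we have $(I_6-W)_\rho=\varepsilon(C_\rho+tJ)$. By \eqref{eq:rank-one-determinant},
\[
 \det(C_\rho+tJ)=\det C_\rho+t\,\one^T\operatorname{adj}(C_\rho)\one ,
\]
which is affine in $t$. It is nonzero for all large $t$ provided the pair $(\det C_\rho,\ \one^T\operatorname{adj}(C_\rho)\one)$ is not $(0,0)$. For the column-replacement determinants, replace column $i$ of $C_\rho+tJ$ by $\one$ and then subtract $t$ times that column from every other column. This removes $tJ$ entirely, so the determinant equals $\varepsilon^{|\rho|-1}\det C_\rho[i\leftarrow\one]$, independent of $t$. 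Thus nondegeneracy reduces to a finite list of exact rational checks over the $63$ subsets $\rho$ at $\beta=54/25$. The checks are that $\det C_\rho[i\leftarrow\one]\ne0$ for every $i\in\rho$, and that the affine polynomial above is not identically zero. Each of the finitely many affine polynomials has at most one root in $t$, so taking $t$ larger than all of them gives a threshold.

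Finally, set $\varepsilon_0$ to be the minimum of $4/29$ and the thresholds coming from $t_1$ and from these roots. For $\varepsilon<\varepsilon_0$ the CTLN is legal and nondegenerate, and its full support satisfies the algebraic stable-support conditions. The equivalence cited from \cite{CurtoGenesonMorrison2024} then makes the corresponding fixed point stable.
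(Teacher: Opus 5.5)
Your plan is correct and follows the paper's proof essentially step for step: you freeze $\beta=54/25$ and send $t\to\infty$, get permission from $C$ via Lemma~\ref{lem:activity}, get stability from Routh--Hurwitz applied to $D$ in the basis $e_i-e_6$ together with Lemma~\ref{lem:spectral-limit}, and reduce nondegeneracy via the affine-in-$t$ formula and the column-subtraction step. The differences are minor: the paper avoids your $63$-subset computation with a parity argument ($25I_k+54N_\rho\equiv I_k \pmod 2$ makes every $\det(25C_\rho)$ and every $\one$-replacement determinant odd, so in particular $\det C_\rho\neq0$), and it obtains stability of the fixed point directly from the Jacobian $-\varepsilon K$ rather than through the cited equivalence.
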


\begin{proof}
Set
\[
 \beta=\frac{54}{25},
 \qquad
 C=I_6+\beta N.
\]
Put $B=25C$.  In the $3+3$ block partition, subtracting $25^{-1}$ times the
lower-left block times the first block row from the second block row gives
the exact determinant reduction
\[
 \det B=\det\begin{pmatrix}
 625&-1566&-5832\\1350&625&-2916\\1350&1350&625
 \end{pmatrix}
 =4{,}482{,}473{,}725=25\cdot179{,}298{,}949.
\]
Since $C=B/25$, this and the displayed matrix--vector product give the
permission certificate
\begin{equation}\label{eq:six-activity}
 \det C=\frac{179298949}{9765625},
 \qquad
 v=
 \begin{pmatrix}
 21597025\\34234375\\34234375\\46065775\\5850625\\67159525
 \end{pmatrix}>0,
 \qquad
 Cv=179298949\one_6.
\end{equation}
Thus $C^{-1}\one_6=v/179298949>0$.
It follows from Lemma~\ref{lem:activity} that $C+tJ_6$ is permitted for
every $t\geq0$.

We next prove that $(P_6C)|_{\one_6^\perp}$ is stable.  Writing
$\mathbf e_1,\ldots,\mathbf e_6$ for the standard basis of $\mathbb R^6$,
direct multiplication in the basis
$\mathbf e_1-\mathbf e_6,\ldots,\mathbf e_5-\mathbf e_6$ of
$\one_6^\perp$ gives the following matrix for this restriction:
\[
 D=\frac1{25}M,
 \qquad
 M=\begin{pmatrix}
 -11&-36&-36&-45&0\\
 -36&-11&-36&-45&-54\\
 -36&-36&-11&-45&-54\\
 72&72&18&34&54\\
 18&18&72&63&25
 \end{pmatrix}.
\]
Here is a hand-checkable certificate for the characteristic polynomial.
Let $\widehat e_1,\ldots,\widehat e_5$ be the standard basis of
$\mathbb R^5$ and let
\[
 S=[(-1,1,0,0,0)^T,\widehat e_2,\widehat e_3,
       \widehat e_4,\widehat e_5].
\]
Then $\det S=-1$, and direct multiplication gives
\[
 S^{-1}MS=
 \begin{pmatrix}
 25&36&36&45&0\\
 0&-47&-72&-90&-54\\
 0&-36&-11&-45&-54\\
 0&72&18&34&54\\
 0&18&72&63&25
 \end{pmatrix}
 =\begin{pmatrix}25&*\\0&Q\end{pmatrix},
\]
where $Q$ is the displayed lower-right $4\times4$ block.  Applying the
principal-minor formula \eqref{eq:coefficient-minors} to this explicit block
gives
\[
 (c_1(Q),c_2(Q),c_3(Q),c_4(Q))=(1,4101,3235,1394278).
\]
The block triangular form therefore gives the exact factorization
\[
 \det(xI_5+M)
 =(x+25)(x^4+x^3+4101x^2+3235x+1394278).
\]
Substituting $x=25z$ and dividing by $25^5$ yields
\begin{equation}\label{eq:six-characteristic}
\det(zI_5+D)
=\frac{z+1}{390625}
 \left(390625z^4+15625z^3+2563125z^2
       +80875z+1394278\right).
\end{equation}
\Needspace{13\baselineskip}
We use the following exact quartic Routh--Hurwitz criterion.  For real
coefficients and $a_4>0$, every zero of
$a_4z^4+a_3z^3+a_2z^2+a_1z+a_0$ lies in the open left half-plane if and
only if
\[
 H_1=a_3,
 \quad H_2=a_3a_2-a_4a_1,
 \quad H_3=a_3a_2a_1-a_4a_1^2-a_3^2a_0,
 \quad H_4=a_0H_3
\]
are all positive \cite{Gantmacher1959}.  For the quartic in
\eqref{eq:six-characteristic}, $a_4=390625>0$, and exact substitution gives
\[
 (H_1,H_2,H_3,H_4)
 =(15625,8457031250,343562500000000,
 479021635375000000000).
\]
All four numbers are positive, so the quartic zeros lie in the open left
half-plane; the remaining factor $z+1$ has zero $-1$.  By
\eqref{eq:spectral-facts}, the zeros of $\det(zI_5+D)$ are the negatives of
the eigenvalues of $D$.  Hence $D$ is stable.  Lemma~\ref{lem:spectral-limit}
now shows that $K=C+tJ_6$ is stable for all sufficiently large $t$.

For nonempty $\rho\subseteq[6]$, let $N_\rho$ be the corresponding principal
submatrix of $N$, put $k=|\rho|$, and define
\[
 B_\rho=25I_k+54N_\rho,
 \qquad
 C_\rho=\frac1{25}B_\rho.
\]
Modulo two, $B_\rho\equiv I_k$, so
$\det B_\rho\equiv1\pmod 2$.  If $e_1,\ldots,e_k$ are the standard coordinate
vectors, replacing column $i$ by
$\one_k=\sum_{j=1}^k e_j$ also gives determinant one modulo two, since every
term except $e_i$ repeats another identity column.  Hence all these
determinants are nonzero.  Scaling gives
\[
 \det C_\rho=25^{-k}\det B_\rho,
\]
and each column-replacement determinant for $C_\rho$ is $25^{1-k}$ times
its counterpart for $B_\rho$.

For $K_\rho=C_\rho+tJ_k$, apply \eqref{eq:rank-one-determinant} with
$A=C_\rho$, $u=t\one_k$, and $v=\one_k$ to obtain
\begin{equation}\label{eq:principal-affine}
 \det K_\rho
 =\det C_\rho+t\one_k^T\operatorname{adj}(C_\rho)\one_k.
\end{equation}
The right side is affine in $t$ with nonzero constant term.  After column
$i$ of $K_\rho$ is replaced by $\one_k$, subtracting $t$ times that column
from every other column leaves exactly the corresponding nonzero replacement
determinant of $C_\rho$, independent of $t$.  Finiteness of the set of
$\rho$ therefore gives $T_1$ such that all required determinants of every
$K_\rho$ are nonzero for $t>T_1$.

Choose $T$ larger than $T_1$, the stability threshold supplied by
Lemma~\ref{lem:spectral-limit}, and $25/4$.  For $t>T$, set
\begin{equation}\label{eq:parameter-specialization}
 \varepsilon=\frac1{t+1},
 \qquad
 \delta=\frac{29}{25}\varepsilon.
\end{equation}
Then \eqref{eq:parameters} gives $\beta=54/25$ and
\[
 q=\frac{29t}{25(t+1)}>1.
\]
So the parameters are legal.  Since
$(I_6-W)_\rho=\varepsilon K_\rho$, its principal determinant scales by
$\varepsilon^{|\rho|}$ and each replacement determinant by
$\varepsilon^{|\rho|-1}$.  Nondegeneracy follows.

The full-support fixed point is positive by \eqref{eq:six-activity} and
Lemma~\ref{lem:activity}.  In a neighborhood of this positive full-support
fixed point, the Jacobian is $-\varepsilon K$, whose eigenvalues have
negative real part.  The fixed point is therefore stable.  Since $N\ne0$,
its support is not a clique.  Finally, taking
$\varepsilon_0=1/(T+1)$ proves the uniform assertion for every
$0<\varepsilon<\varepsilon_0$, and the displayed formula for $q$ follows
directly from \eqref{eq:parameter-specialization}.
\end{proof}

\Needspace{24\baselineskip}
\section{A linear sufficient range}\label{sec:linear}

\begin{theorem}\label{thm:linear}
Let $\delta>0$ and $0<\varepsilon<\delta/(1+\delta)$, and define
$t,\beta,q$ by \eqref{eq:parameters}.  Let $m\geq3$ and let
$N\in\{0,1\}^{m\times m}$ be nonzero with zero diagonal.  If
$K=I_m+\beta N+tJ_m$ is permitted and stable, then
\begin{equation}\label{eq:support-bound}
 q<m-2-\frac{m-3}{2}\varepsilon.
\end{equation}
Consequently, at these parameters, every CTLN on a graph with $n\geq3$
vertices has the algebraic target-free clique classification whenever
\begin{equation}\label{eq:global-bound}
 q\geq n-2-\frac{n-3}{2}\varepsilon.
\end{equation}
For nondegenerate CTLNs, the same condition gives the dynamical
target-free clique classification.
\end{theorem}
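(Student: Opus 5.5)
The plan is to derive \eqref{eq:support-bound} from two ingredients. The first is the positivity of $x=K^{-1}\one_m$. The second is a necessary condition for stability, namely positivity of coefficients of $\det(zI_m+K)$ as in the elementary observation after Lemma~\ref{lem:activity}. It is convenient to rewrite the target inequality in terms of $t$ and $\beta$. Since $\varepsilon=1/(t+1)$ and $q=(\beta-1)(1-\varepsilon)=(\beta-1)t/(t+1)$, multiplying \eqref{eq:support-bound} by $t+1$ shows it is equivalent to
\[
 (\beta-1)t<(m-2)t+\frac{m-1}{2},
 \qquad\text{i.e.}\qquad
 t(\beta-m+1)<\frac{m-1}{2}.
\]
So I will assume, for contradiction, that $t(\beta-m+1)\geq(m-1)/2$, which makes $\beta$ large compared with $m$.

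\emph{Permission step.} By Lemma~\ref{lem:activity}, $C=I_m+\beta N$ is permitted, with $y=C^{-1}\one_m>0$ and $x=y/(1+t\one^Ty)$. Since $N\neq0$, pick a row $a$ and a column $b\neq a$ with $N_{ab}=1$. Row $a$ of $Cy=\one$ then gives $y_a+\beta y_b\le1$, so $y_b<1/\beta$. Summing all rows of $Cy=\one$ gives
\[
 \one^Ty+\beta\,\one^TNy=m.
\]
These identities bound the column weights $\one^TN$ against $y$. In particular, vertices heavily hit by nonedges must have small activity. That step is the role of permission.

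\emph{Stability step.} Stability forces $\det K>0$ and, by Lemma~\ref{lem:spectral-limit}'s block form, it also constrains the compression of $K$ to $\one^\perp$. Concretely, I will use three facts.
\begin{itemize}
 \item The constant coefficient: $\det K=\det C\,(1+t\one^Ty)>0$, hence $\det C>0$.
 \item The second coefficient $c_2(K)>0$. From the $2\times2$ computation already in the paper, this equals
 \[
 c_2(K)=\binom m2(1+2t)-\beta tL-\beta^2w,
 \]
 where $L=\one^TN\one$ and $w$ counts doubly-nonadjacent pairs.
 \item The trace relation between the large eigenvalue of $K$, which is near $t\,m+\one^TC\one/m$, and the remaining $m-1$ eigenvalues. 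Their real parts sum to $\operatorname{tr}(P_mC)=m-1-\beta L/m$. Positivity of each real part and the identity $c_1(K)=m(1+t)$ then bound how far the top eigenvalue can exceed $tm$.
\end{itemize}
The key inequality I aim for is: stability plus positivity of $y$ forces $\beta L/m$ to be less than roughly $(m-1)$ plus a correction of order $1/t$ coming from the rank-one coupling $t\one\one^T$. With $L\geq1$ and a sharpening that uses $y_b<1/\beta$ to charge the single nonedge, this correction should produce exactly the $(m-1)/2$ term. \textbf{This is the main obstacle.} Stability of $K$ does not directly give positivity of $P_mCP_m$ on $\one^\perp$, so I must bound the off-diagonal coupling $b^T\operatorname{adj}(\lambda I-D)c$ in \eqref{eq:limit-characteristic}. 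I would first try evaluating \eqref{eq:limit-characteristic} at $\lambda=0$, which gives $\det K$ in terms of $mt+a$, $\det D$, and the adjugate term. I would then combine it with $c_2(K)>0$, using the permission identity to replace $\one^TNy$.

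\emph{Consequence.} Once \eqref{eq:support-bound} is established, the rest is bookkeeping. The right side $m-2-\tfrac{m-3}{2}\varepsilon$ is increasing in $m$, since its derivative is $1-\varepsilon/2>0$. Hence, under \eqref{eq:global-bound}, every candidate support with $3\le m\le n$ and nonzero $N$ fails to be simultaneously permitted and stable. Supports of size at most two with $N\neq0$ are excluded by the order-two observation. Cliques satisfy the algebraic stable-support conditions exactly when they are target-free, by \eqref{eq:clique-input}. This gives the algebraic target-free clique classification. The dynamical version for nondegenerate CTLNs then follows from the cited equivalence \cite{CurtoGenesonMorrison2024}.
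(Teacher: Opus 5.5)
Your reformulation of \eqref{eq:support-bound} as $t(\beta-m+1)<(m-1)/2$ is correct, and so is your closing bookkeeping (monotonicity in $m$, the order-two exclusion, the clique rule). The gap is the central inequality: it is never proved. You flag it yourself as the main obstacle, and the route you sketch cannot reach it.

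\begin{itemize}
\item Positivity of $c_2(K)$, or of the limiting trace $\operatorname{tr}(P_mC)=m-1-\beta L/m$, only gives $\beta L\lesssim m(m-1)$. (That trace is the sum of the $m-1$ bounded eigenvalues only as $t\to\infty$, not at finite $t$.)
\item If all you know is $L\geq1$, or if many indices do not touch $N$ so that $L$ stays small as $m$ grows, this is a quadratic bound of the already known $q\lesssim m^2-m-1$ type.
\item Charging one nonedge through $y_b<1/\beta$ does not change the leading order.
\item Evaluating the bordered determinant at $\lambda=0$ just recovers $\det K>0$. That constrains nothing linearly: for nilpotent $N$, $\det C=1$ for every $\beta$.
\end{itemize}

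Two ingredients are missing.

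\emph{A lower bound on $L$.} Let $h$ be the number of indices whose row or column of $N$ is nonzero. You already have the seed: from $\beta y_b<1$, conclude that row $b$ is nonzero, since a zero row would force $y_b=1$. Hence $L\geq h$. That alone would still allow $q$ near $m-1$, so stability must exclude $L=h$. In that case each such row has exactly one entry equal to one, and the induced map has a cycle $i_1\to\cdots\to i_\ell$ with $\ell\geq2$. Set $a_{i_s}=\zeta^{-(s-1)}$ with $\zeta=\exp(2\pi\ii\lfloor\ell/2\rfloor/\ell)$, and $a=0$ off the cycle. Then $a^T\one=0$ and $a^TK=(1+\beta\zeta)a^T$, with $\operatorname{Re}(1+\beta\zeta)\leq1-\beta/2<0$, contradicting stability. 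So $L\geq h+1$.

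\emph{Splitting off the isolated indices.} Suppose $r=m-h\geq1$. The zero-sum vectors supported on isolated indices form an $(r-1)$-dimensional space on which $K$ is the identity. So $\det(zI_m+K)=(z+1)^{r-1}p(z)$. Positivity of the second coefficient of $p$ gives
$\beta tL<\binom{h+1}{2}+hmt$,
with $hmt$ in place of the $m(m-1)t$ coming from $c_2(K)$. Dividing by $L$ and using $L\geq h+1$ and $h\leq m-1$ gives exactly your target $t(\beta-m+1)<(m-1)/2$. The $(m-1)/2$ comes from $\binom{h+1}{2}/L\leq h/2$.

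When $r=0$, $L\geq m+1$ together with $c_2(K)>0$ gives the same target.
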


\begin{proof}
Legality gives $\beta>2$ and $t>0$.  By Lemma~\ref{lem:activity}, the
vector $y=(I_m+\beta N)^{-1}\one_m$ is positive and satisfies
\begin{equation}\label{eq:y}
 y_i+\beta\sum_{j=1}^mN_{ij}y_j=1
 \qquad(1\leq i\leq m).
\end{equation}
Let
\[
 H=\{i\in[m]:\text{row }i\text{ or column }i
       \text{ of }N\text{ is nonzero}\},
 \qquad h=|H|,
 \qquad r=m-h,
\]
and call the indices in $H$ \emph{nonisolated} and those in
$[m]\setminus H$ \emph{isolated}.  Put $L=\one_m^TN\one_m$.  Since $N$
is nonzero, $L>0$.  If column
$j$ is nonzero, then $N_{ij}=1$ for some $i$,
and \eqref{eq:y} gives $\beta y_j<1$.  Row $j$ must then be nonzero,
because a zero row would give $y_j=1$.  Thus every one of the $h$
nonisolated vertices has a nonzero row, and consequently $L\geq h$.

In fact,
\begin{equation}\label{eq:L-lower}
 L\geq h+1.
\end{equation}
Suppose instead that $L=h$.  Every nonisolated row then contains exactly
one entry equal to one, and its column is also nonisolated.  The resulting
map $f:H\to H$, defined by letting $f(i)$ be the unique $j$ with
$N_{ij}=1$, contains a cycle of distinct vertices
$i_1,\ldots,i_\ell$ with $\ell\geq2$ and
\[
 N_{i_s,i_{s+1}}=1
 \quad(1\leq s\leq\ell),
\]
where subscripts are cyclic.  Put
\[
 \zeta=\exp\left(
 \frac{2\pi\ii\,\lfloor\ell/2\rfloor}{\ell}
 \right).
\]
Then $\zeta\ne1$ and $\operatorname{Re}\zeta\leq-1/2$ (for odd $\ell$,
the real part is $-\cos(\pi/\ell)$).  Define $a^T\in\mathbb C^{1\times m}$
by
\[
 a_{i_s}=\zeta^{-(s-1)},
 \qquad
 a_i=0\quad\text{off the cycle}.
\]
Each cycle row has its unique one in the next cycle column, so
\[
 a^TN=\zeta a^T,
 \qquad
 a^T\one_m=\sum_{s=0}^{\ell-1}\zeta^{-s}=0.
\]
Consequently,
\[
 a^TK=(1+\beta\zeta)a^T.
\]
Since $a^T\ne0$, transposing shows that $1+\beta\zeta$ is an eigenvalue of
$K$.  Its real part is at most $1-\beta/2<0$, contradicting stability.  This
proves \eqref{eq:L-lower}.

Let $w$ be the number of unordered pairs $\{i,j\}$ for which
$N_{ij}=N_{ji}=1$.  The determinant of the principal two-by-two submatrix
of $K$ indexed by $i<j$ is
\[
 (1+t)^2-(t+\beta N_{ij})(t+\beta N_{ji})
 =1+2t-\beta t(N_{ij}+N_{ji})-\beta^2N_{ij}N_{ji}.
\]
Summing over all pairs gives
\begin{equation}\label{eq:c2}
 c_2(K)=\binom m2(1+2t)-\beta tL-\beta^2w.
\end{equation}

Suppose first that $r\geq1$, and define
\[
 U=\{x\in\mathbb R^m:x_i=0\text{ for }i\in H,
                    \ \one_m^Tx=0\}.
\]
The space of vectors supported on the $r$ isolated indices has dimension
$r$, so $\dim U=r-1$; moreover $Nx=J_mx=0$ and $Kx=x$ on $U$.  Hence
\[
 \det(zI_m+K)=(z+1)^{r-1}p(z),
\]
where $p$ is real, monic, of degree $h+1$, and has positive coefficients by
stability and the factor observation above.  Write
\[
 p(z)=z^{h+1}+a_1z^h+a_2z^{h-1}+\cdots.
\]
Since $h\geq2$, $a_2$ is present.  Comparing the first two coefficients and
using \eqref{eq:c2} gives
\[
 a_1=h+1+mt,
 \qquad
 a_2=c_2(K)-(r-1)a_1-\binom{r-1}{2},
\]
and therefore
\begin{equation}\label{eq:a2}
 0<a_2=\binom{h+1}{2}+hmt-\beta tL-\beta^2w.
\end{equation}
The parameter identity
\begin{equation}\label{eq:beta-t}
 \beta t=q(t+1)+t
\end{equation}
follows from $1/\varepsilon=t+1$ and $q=\delta t$.  Together with
$\varepsilon=1/(t+1)$, inequality \eqref{eq:a2} gives
\[
 [q(t+1)+t]L<\binom{h+1}{2}+hmt,
\]
where dropping $-\beta^2w$ only enlarges the right side.  Dividing by
$L(t+1)>0$ and using $t/(t+1)=1-\varepsilon$ gives
\begin{equation}\label{eq:q-intermediate}
 q<(1-\varepsilon)\left(\frac{hm}{L}-1\right)
 +\varepsilon\frac{\binom{h+1}{2}}{L}.
\end{equation}
By $h\leq m-1$ and \eqref{eq:L-lower},
\[
 \frac{hm}{L}-1
 \leq\frac{hm}{h+1}-1\leq m-2,
 \qquad
 \frac{\binom{h+1}{2}}{L}
 \leq\frac h2\leq\frac{m-1}{2}.
\]
Substitution in \eqref{eq:q-intermediate} gives
\[
 q<(1-\varepsilon)(m-2)+\varepsilon\frac{m-1}{2}
   =m-2-\frac{m-3}{2}\varepsilon,
\]
which proves
\eqref{eq:support-bound} when $r\geq1$.

Suppose now that $r=0$.  Then $h=m$ and \eqref{eq:L-lower} gives
$L\geq m+1$.  Stability gives $c_2(K)>0$ by the coefficient observation
above.
Using \eqref{eq:c2}, dropping $-\beta^2w$, and applying
\eqref{eq:beta-t} gives
\[
 [q(t+1)+t]L<\binom m2(1+2t).
\]
Division by $L(t+1)$, together with
$(1+2t)/(t+1)=2-\varepsilon$ and
$t/(t+1)=1-\varepsilon$, gives
\begin{equation}\label{eq:no-isolates-bound}
 q<\frac{\binom m2(2-\varepsilon)}{L}-(1-\varepsilon)
 \leq\frac{m(m-1)(2-\varepsilon)}{2(m+1)}-(1-\varepsilon).
\end{equation}
The right side of \eqref{eq:support-bound} exceeds the last expression by
\[
 \frac{(2-\varepsilon)(m-1)}{2(m+1)}>0.
\]
This proves \eqref{eq:support-bound} in the remaining case.

The right side of \eqref{eq:support-bound} increases with $m$ by
$1-\varepsilon/2>0$.  Thus \eqref{eq:global-bound} excludes nonclique
supports of sizes three through $n$, and the order-two determinant
calculation above excludes the smaller cases.  The clique calculation says
that the remaining supports are exactly the target-free cliques.  This
proves the algebraic classification; for nondegenerate CTLNs, the support
criterion in Section~\ref{sec:setup} gives the dynamical conclusion.
\end{proof}

\begin{corollary}\label{cor:epsilon}
Let $n\geq3$, $\delta>0$, and
$0<\varepsilon<\delta/(1+\delta)$.  Every CTLN at these parameters has the
algebraic target-free clique classification, and every nondegenerate CTLN
has the dynamical target-free clique classification, if
\begin{equation}\label{eq:exact-epsilon}
 \varepsilon\leq
 \frac{2\delta}
 {\delta+n-2+\sqrt{\delta^2+2\delta+(n-2)^2}}.
\end{equation}
In particular, either conclusion holds if
\begin{equation}\label{eq:simple-epsilon}
 \varepsilon\leq\frac{\delta}{\delta+n-2}.
\end{equation}
\end{corollary}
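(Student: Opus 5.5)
The corollary will follow from Theorem~\ref{thm:linear} once we show that \eqref{eq:exact-epsilon} implies \eqref{eq:global-bound}. We rewrite \eqref{eq:global-bound} entirely in terms of $\varepsilon$ and $\delta$ using $q=\delta(1-\varepsilon)/\varepsilon$. Multiplying by $2\varepsilon>0$, the condition $q\geq n-2-\frac{n-3}{2}\varepsilon$ becomes
\[
 f(\varepsilon):=(n-3)\varepsilon^2-2(\delta+n-2)\varepsilon+2\delta\geq0 .
\]
The task is therefore a one-variable quadratic inequality on the legal interval $0<\varepsilon<\delta/(1+\delta)$.

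If $n=3$, then $f$ is linear and $f\geq0$ means $\varepsilon\leq\delta/(\delta+1)$. The bound in \eqref{eq:exact-epsilon} also equals $\delta/(\delta+1)$ in this case: with $n-2=1$ the square root is $\delta+1$, so the bound is $2\delta/(2\delta+2)$. Hence the claim holds on the legal range.

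For $n\geq4$, $f$ is an upward parabola with $f(0)=2\delta>0$. Its smaller root is
\[
 \varepsilon_-=\frac{(\delta+n-2)-\sqrt{(\delta+n-2)^2-2\delta(n-3)}}{n-3}.
\]
The discriminant simplifies to $(\delta+n-2)^2-2\delta(n-3)=\delta^2+2\delta+(n-2)^2$. Rationalizing the numerator, multiplying by the conjugate, gives
\[
 \varepsilon_-=\frac{2\delta}{\delta+n-2+\sqrt{\delta^2+2\delta+(n-2)^2}},
\]
which is exactly the right side of \eqref{eq:exact-epsilon}. Since $f>0$ on $[0,\varepsilon_-)$ and $f(\varepsilon_-)=0$, every $\varepsilon\leq\varepsilon_-$ satisfies $f(\varepsilon)\geq0$. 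This gives \eqref{eq:global-bound}, and Theorem~\ref{thm:linear} yields both the algebraic and the nondegenerate dynamical conclusions.

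This closed form also covers $n=3$, so we can use it uniformly.

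For \eqref{eq:simple-epsilon}, it suffices to show that $\delta/(\delta+n-2)$ is at most the bound in \eqref{eq:exact-epsilon}. That is equivalent to
\[
 \delta+n-2+\sqrt{\delta^2+2\delta+(n-2)^2}\leq2(\delta+n-2),
\]
that is, $\sqrt{\delta^2+2\delta+(n-2)^2}\leq\delta+n-2$. Squaring, this reduces to $2\delta\leq2\delta(n-2)$, which holds for $n\geq3$.

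The only delicate point is the sign bookkeeping. We must confirm that $\varepsilon_-$ is the relevant root: the parabola opens upward and is positive at $0$, so the admissible set near $0$ is $[0,\varepsilon_-]$. We must also handle the degenerate linear case $n=3$ separately before dividing by $n-3$. Apart from that, the argument is routine algebra.
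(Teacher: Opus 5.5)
Your proof is correct and follows the paper's argument: substitute $q=\delta(1-\varepsilon)/\varepsilon$ into \eqref{eq:global-bound}, treat $n=3$ as the linear case implied by legality, and for $n\geq4$ identify the right side of \eqref{eq:exact-epsilon} as the rationalized smaller root of the upward parabola. The differences are cosmetic. You omit the paper's larger-root remark, which only shows the bound is sharp on the legal range and is not needed for the one-directional implication. You also verify \eqref{eq:simple-epsilon} by comparing the two thresholds directly, whereas the paper notes that \eqref{eq:simple-epsilon} is equivalent to $q\geq n-2$.
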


\begin{proof}
Substituting $q=\delta(1-\varepsilon)/\varepsilon$ into
\eqref{eq:global-bound} gives
\begin{equation}\label{eq:quadratic}
 \frac{n-3}{2}\varepsilon^2
 -(\delta+n-2)\varepsilon+\delta\geq0.
\end{equation}
For $n=3$, this reduces to
$\varepsilon\leq\delta/(\delta+1)$, already implied by legality.  For
$n>3$, the discriminant is
$(\delta+n-2)^2-2\delta(n-3)=\delta^2+2\delta+(n-2)^2$; the quadratic formula
and rationalization give \eqref{eq:exact-epsilon} as the smaller root.  The
larger root exceeds $(\delta+n-2)/(n-3)>1$, while legal
$\varepsilon$ satisfies $0<\varepsilon<1$.  Hence
\eqref{eq:exact-epsilon} implies \eqref{eq:quadratic}.

Finally, \eqref{eq:simple-epsilon} is equivalent to $q\geq n-2$, which
implies \eqref{eq:global-bound}.
\end{proof}

\section*{Acknowledgments}

Codex with GPT-5.6 and Claude Code with Opus 4.8 and Fable 5 were used
for proof exploration, proof criticism, exposition, and revision.

\end{document}